\documentclass[letterpaper,11pt]{amsart}

\usepackage{geometry}
\geometry{margin=1in}

\usepackage{wrapfig}
\usepackage{caption}
\usepackage{subcaption}
\usepackage[colorinlistoftodos]{todonotes}

\usepackage[utf8]{inputenc}
\usepackage[english]{babel}
\usepackage[T1]{fontenc}
\usepackage{hyperref}
\usepackage{tabularray}

\usepackage{latexsym}
\usepackage{amsfonts}
\usepackage{amssymb}
\usepackage{amsmath}
\usepackage{eucal}
\usepackage[a]{esvect} 
\usepackage{bm} 

\usepackage{amsthm}
\theoremstyle{plain}
\newtheorem{theorem}{Theorem}[section]
\newtheorem{lemma}[theorem]{Lemma}

\theoremstyle{definition}
\newtheorem{definition}[theorem]{Definition}
\newtheorem{example}[theorem]{Example}

\usepackage{tikz}
\usetikzlibrary{automata,positioning,arrows.meta,backgrounds,fit,calc}
\tikzset{arr/.style={-Latex}}
\usepackage{pgfmath}
\usepackage{tikz-cd}

\usepackage[numbers,sort&compress]{natbib}


\usepackage{todonotes}
\setuptodonotes{color=black!10,bordercolor=black!30,textcolor=black}


\newcommand{\cD}{\mathcal{D}}
\newcommand{\cS}{\mathcal{S}}
\newcommand{\cT}{\mathcal{T}}
\newcommand{\cU}{\mathcal{U}}
\newcommand{\cI}{\mathcal{I}}
\newcommand{\cK}{\mathcal{K}}
\newcommand{\Fin}{\mathbf{Fin}} 
\newcommand{\Z}{\mathbb{Z}}

\newcommand{\imgs}[1]{{\mathcal I}#1}
\DeclareMathOperator{\enc}{ enc }
\DeclareMathOperator{\dec}{ dec }

\newcommand{\ENDO}[1]{#1^{\bm{\circlearrowright}}}
\newcommand{\TO}[2]{m_{#1\rightarrow#2}}
\newcommand{\TOREP}[1]{m_{#1\rightarrow\ast}}
\newcommand{\FROMREP}[1]{m_{\ast\rightarrow#1}}

\DeclareMathOperator{\Ob}{Ob}
\DeclareMathOperator{\Hom}{hom}
\DeclareMathOperator{\dom}{dom}
\DeclareMathOperator{\cod}{cod}

\newcommand{\FiniteTransformation}[2]{
\begin{tikzpicture}
  \foreach \i in {1,...,#1} {
    \pgfmathparse{0.3*\i}
    \let\xcoord\pgfmathresult
    \node[draw,thick,circle,inner sep=1pt](t\i) at (\xcoord, 0) {};
    \node[draw,thick,circle,inner sep=1pt](b\i) at (\xcoord, -1) {};
  }
  \foreach \x/\y in #2 {
    \draw[-Latex] (t\x) -- (b\y);
  }
\end{tikzpicture}}

\begin{document}
\title{Representation Independent Decompositions of Computation}

\author{Attila Egri-Nagy$^1$}
\address{$^1$Akita International University, Japan}
\email{egri-nagy@aiu.ac.jp}
\author{Chrystopher L. Nehaniv$^2$}
\address{$^2$University of Waterloo, Canada}
\email{cnehaniv@uwaterloo.ca}

\begin{abstract}
Constructing complex computation from simpler building blocks is a defining problem of computer science.
In algebraic automata theory, we represent computing devices as semigroups.
Accordingly, we use mathematical tools like products and homomorphisms to understand computation through hierarchical decompositions.
To address the shortcomings of some of the existing decomposition methods, \emph{we generalize semigroup representations to semigroupoids by introducing types}.
On the abstraction level of category theory, we describe a flexible, iterative and representation independent algorithm.
Moving from the specific state transition model to the abstract composition of arrows unifies seemingly different decomposition
methods and clarifies the three algorithmic stages:
\emph{collapse, copy} and \emph{compress}.
We collapse some dynamics through a morphism to the top level; copy the forgotten details into the bottom level; and finally we apply compression there.
The hierarchical connections are solely for locating the repeating patterns in the compression.
These theoretical findings pave the way for more precise computer algebra tools and allow for understanding computation with other algebraic structures.
\end{abstract}

\maketitle

  \hypersetup{hidelinks}
  \tableofcontents

\section{Introduction}

Organizing computation into a hierarchical network of modular pieces is pervasive at all levels of software, hardware and communication. An app is built on several libraries; a library is a collection of functions calling other functions and operating system routines. A microprocessor consists of several units,  which also have their internal structures down to the transistors.
Communication protocols, like the Internet protocol suite (TCP/IP), consist of several abstraction layers, where the lower layers do not expose their details.
All these hierarchical decompositions can be modeled in state transition
systems, which have several mathematical descriptions. \emph{What is the most suitable mathematical representation for state transition systems?}
Following the problems of practical usability of decomposition algorithms in algebraic automata theory, we argue  typed semigroups (semigroupoids) are the most natural choice.

Finding the right level of abstraction is an important aspect of writing mathematics and developing software.
When we are too specific, the results have narrow reach and the use cases are few.
If we overdo abstraction, then a price is paid for mathematical text by the cognitive load of the
reader,  and for program execution by the runtime overhead of the layers and interfaces.
It is a fortunate situation when the suitable level of abstraction reveals itself.
This happens in the hierarchical decomposition of transformation semigroups.
The Covering Lemma method \cite{FromRelToEmulationNehaniv1996,egrinagy2024relation} creates a two-level hierarchical decomposition of a transformation semigroup, however the second level does not form a semigroup.
This is an obstacle for iterating the construction and arguably unwieldy to deal with two levels of abstraction at the same time.
In this paper, we generalize decomposition to semigroupoids, where the problem simply does not occur.
In a way, this is one good answer for the question `What is category theory useful for?'.
Therefore, we reformulate the decomposition method on the categorical level.
Admittedly, we make the assumption that a lower number of algebraic structures involved in the explanation is preferred.
This is not merely an aesthetic principle.
Computer algebra implementation can benefit from the simplification: it could improve readability, verifiability and efficiency.

\subsection{Background}
The Krohn-Rhodes Prime Decomposition Theorem \cite{primedecomp65} is a seminal result in \emph{algebraic automata theory}.
Informally speaking, the theorem states that any finite state transition system can be built from smaller, simpler components in a hierarchical way.
As the control information flow is unidirectional in those decompositions, the cognitive operation of abstraction is particularly easy to carry out.
This makes the decomposition theorem central to scientific understanding and promises numerous applications for systems with automata models \cite{wildbook}.
The \textsf{SgpDec} semigroup decomposition package \cite{SgpDec} for the
\textsf{GAP} computer algebra system \cite{GAP4} provides a computational tool,
but it has limitations in scalability and flexibility of the implemented algorithms.
This paper aims to advance the theory in order to enable better decomposition tools.

The natural connection between abstract algebra and automata theory is deepened by \emph{category theory} \cite{MacLane1998,awodey2010category,Leinster_2014,riehl2017category}.
On the \emph{semigroup theory} side, \cite{TilsonCat} uses a derived category to
encode information lost in a surjective semigroup homomorphism.
This is one of the origins of the Covering Lemma method.
The monograph \cite{QBook} shows how far the categorical approach has gone in the theory of
finite semigroups.

From \emph{computer science} perspective, state
transition systems can be generalized as set-valued functors \cite{barr1995category}.
It is also argued there, that automata can be used as another  starting point for defining categories, augmenting the more traditional approach beginning with partial orders.
Generalization goes from single object to many object categories leading to  \emph{typed monoid action}.
The textbook \cite{lawvere2009conceptual} also develops categories as discrete
dynamical systems.
This paper aims to extend and use these ideas for practical decomposition algorithms.

\subsection{Notation}
We assume basic category theory knowledge including categories and functors (e.g., \cite{awodey2010category,Leinster_2014}).
However, we use different notational conventions for composition.
In category theory, the standard notation is $g\circ f$, since it works well
with the usual function application $g(f(X))$.
Here, we act and compose on the right.
We write $Xfg$ and thus the composite is $fg$, just like an automaton would read its input symbols from left to right.
In this paper we study finite structures, and most of the time we omit mentioning finiteness.

\subsection{Structure of the paper} In Section \ref{sect:sgps}, we briefly describe the original Covering Lemma algorithm for transformation semigroups by an example.
We point out the main deficiencies of the existing implementation and give a specification for an improved algorithm.
In Section \ref{sect:sgpoids}, we introduce types to semigroups by giving the definition of semigroupoids and their structure preserving maps.
In Section \ref{sect:sgpoiddec}, we give a representation independent semigroupoid decomposition algorithm that solves the problems described in Section \ref{sect:sgps}.
The main result is Theorem \ref{thm:sgpoiddec}.
In Section \ref{sect:holonomy}, we demonstrate  that the celebrated holonomy decomposition \cite{eilenberg} is essentially the same as the Covering Lemma method.

\section{Covering Lemma Decomposition Method for Transformation Semigroups}
\label{sect:sgps}

Semigroups are algebraic objects, i.e., sets with some added structure defined by operations on the elements.
The abstract definition does not give any details about those elements.

\begin{definition}
A \emph{semigroup} is a set $S$ with an associative binary operation $S\times S\rightarrow S$.
A \emph{monoid} is a semigroup with an identity element $e$, such that $es=se=s$ for all $s\in S$.
\end{definition}

A monoid can be viewed as a category with a single object acting as a
placeholder for the (loop) arrows that compose according to the defining composition table.
In automata theory we are interested in state transition systems, thus we give
meaning to the arrows.
We interpret the semigroup elements as transformations of a state set.
\begin{definition}[Transformation Semigroups]
A \emph{transformation semigroup} $(X,S)$ is a finite nonempty set of
\emph{states} (points) $X$   and a set $S$ of total transformations of $X$, i.e.,
functions of type $X\rightarrow X$, closed under composition.
\end{definition}

From the categorical perspective, a transformation representation of a monoid
$M$ is a set-valued functor $M\rightarrow\Fin$, into the category of finite sets
and functions between them.
\begin{example}[The flip-flop monoid]
\label{ex:flip-flop}
A composition table defines the monoid abstractly (Fig.~\ref{fig:flip-flop}).
It describes equations, e.g., $w_1r= w_1$, $w_1w_0=w_0$.
We can interpret the elements of the monoid: $r$ -- read, $w_0$ -- write 0, $w_1$ -- write 1.
The read operation acts as an identity.
To see why this makes sense, we need to represent the elements as transformations of a set.
Here, we can use the set $\{0,1\}$, representing the two states of a 1-bit memory device.
An example of a computation: $0w_1rrw_0rw_1r$ visits states 0, 1, 1, 1, 0, 0, 1, 1 in succession.
This is untyped, so the monoid elements can be put into any sequence.
\end{example}

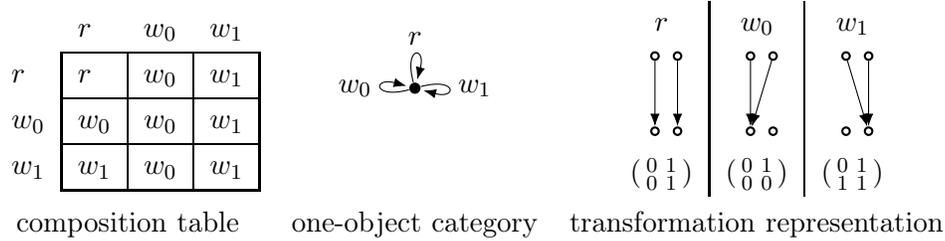
\begin{figure}
\begin{tblr}{ccc}
  \begin{tblr}{
      hline{2-5}={2-5}{0.4pt,solid},
      vline{2-5}={2-5}{0.4pt,solid}}
     &$r$&$w_0$& $w_1$\\
    $r$&$r$&$w_0$&$w_1$\\
    $w_0$&$w_0$&$w_0$&$w_1$\\
    $w_1$&$w_1$&$w_0$&$w_1$
  \end{tblr}
&
\begin{tikzpicture}
  \tikzset{obj/.style={fill,circle,inner sep=1.5pt},
          arr/.style={-Latex}}
  \node [obj] (o) {};
  \path[->,every loop/.append style=arr]
  (o) edge [loop above] node {$r$} (o)
  (o) edge [loop left] node {$w_0$} (o)
  (o) edge [loop right] node {$w_1$} (o);
\end{tikzpicture}
&
  \begin{tblr}{c|c|c}
$r$ & $w_0$ & $w_1$ \\
  \FiniteTransformation{2}{{1/1,2/2}} & \FiniteTransformation{2}{{1/1,2/1}} & \FiniteTransformation{2}{{1/2,2/2}}\\
  $\left(\begin{smallmatrix}
    0 & 1 \\
    0 & 1
  \end{smallmatrix}\right)$ & $\left(\begin{smallmatrix}
    0 & 1 \\
    0 & 0
  \end{smallmatrix}\right)$ & $\left(\begin{smallmatrix}
    0 & 1 \\
    1 & 1
  \end{smallmatrix}\right)$
\end{tblr}\\
composition table & one-object category &transformation representation
\end{tblr}
\caption{Representations of the flip-flop monoid.}
\label{fig:flip-flop}
\end{figure}

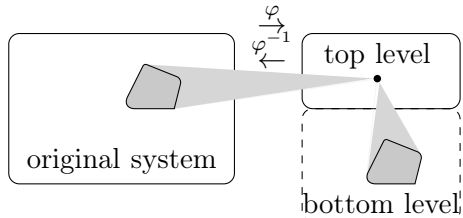
\begin{figure}
  \begin{tikzpicture}
    \newcommand{\myshape}{
      \draw[fill=gray!42,thin] (0,0) [rounded corners=1pt] --
      (.3,.6) [rounded corners=2pt] -- (0.8,.4)
      [sharp corners] -- (0.7,0)
    [rounded corners=5pt] -- cycle;
    }
    \coordinate (X) at (3.9,2);
    \coordinate (pinhole) at ($(X) +(1,-.6)$);
    \coordinate (Y) at ($(X) +(0,-2.5)$);
    \draw[rounded corners] (0,0) rectangle ++(3,2);
    \draw[rounded corners] ($(X) +(0,-1)$) rectangle ++(2.1,1);
    \draw[rounded corners,dashed] (Y) rectangle ++(2.1,1.5);
  \node[inner sep=1pt, circle, fill=black] at (pinhole) (P) {};
   \path[fill=gray!32] (2.2,1) -- (P) -- (1.81,1.59) -- (2.2,1);
   \path[fill=gray!32] ( $(Y) +(0.86,0.57)$ ) -- (P) -- ($(Y) +(1.58,0.89)$) -- ($(Y) +(0.86,0.57)$);
  \node at (1.5,.3) {original system};
  \node at (4.9,1.7) {top level};
  \node at (4.95,-.25) {bottom level};
  \node at (3.5,2.2) {$\overset{\varphi}{\rightarrow}$};
  \node at (3.5,1.8) {$\overset{\varphi^{-1}}{\leftarrow}$};
  \begin{scope}[yshift=1cm,xshift=1.5cm]
    \myshape
  \end{scope}
  \begin{scope}[shift={($(Y) + (0.8,.5)$)}]
    \myshape
  \end{scope}
  \end{tikzpicture}
  \caption{The camera obscura (pinhole camera) metaphor for hierarchical decompositions of state transition systems. The surjective morphism $\varphi$ defines the top level of the decomposition. All the information lost in this map is `projected' down to the second, lower level component. However, the resulting bottom level component is not a well-defined semigroup in the case  $\varphi$ is a morphism of semigroups.}
\label{fig:pinhole}
\end{figure}

Computation is built from irreversible (memory storage, Example \ref{ex:flip-flop}) and reversible parts (permutation groups, Example \ref{ex:Z4}).
How exactly can we build and decompose computing structures is our main interest here.
For transformation semigroups, the Covering Lemma decomposition method \cite{FromRelToEmulationNehaniv1996,egrinagy2024relation} is an easy to explain algorithm as it uses a very simple idea.
We decompose an automaton into two parts, top and bottom.
We identify the top part first, then form the bottom one from what is left out from the top.
This sounds like a vacuously true description of any decomposition.
However, we cannot cut an automaton into two arbitrary halves.
The top and the bottom have to be related hierarchically.
Only special situations allow one to build a system from two independent
parts.
Moreover, there has to be a morphic relationship between the original and the top part.
For semigroups we need to use relations instead of functions for morphisms,
otherwise some semigroups (most notably the full transformation semigroup)
become indecomposable.
\begin{definition}[Relational Morphism]
  A \emph{relational morphism} of transformation semigroups
  $\varphi:(X,S)\rightarrow (Y,T)$ is a pair of relations $(\varphi_0: X\rightarrow Y, \varphi_1: S \rightarrow T)$ that are fully defined, i.e., $\varphi_0(x)\neq\varnothing$ and $\varphi_1(s)\neq\varnothing$, and satisfy the condition of compatible actions
$\varphi_0(x)\cdot\varphi_1(s)\subseteq \varphi_0(x\cdot s)$ for all $x\in X$ and $s\in S$.
The subscript on $\varphi$ indicates the `dimensionality' of the map: $\varphi_0$ for states (0-dimensional points), $\varphi_1$ for state transitions (1-dimensional edges between states).
\end{definition}

The three main steps of the decomposition are as follows.
\begin{enumerate}
  \item We construct an approximate description by a structure-forgetting map
    $\varphi$, which is a surjective relational morphism. The morphism has to \emph{collapse}
    some of the dynamics to be useful. Its image serves as the top level component of the hierarchical decomposition.
\item We recover all the details lost in the first step and package them into the second level component.
Metaphorically speaking, we will use the top level component as a pinhole camera
(see Fig.~\ref{fig:pinhole}).
We pick a state (the pinhole), and peek through  (along $\varphi^{-1}$) to see
the corresponding states and their transformations in the original automaton.
In other words, we simply \emph{copy} the preimages.
\item We identify the projected pieces when there is an isomorphism between them. This last phase is \emph{compression}.
\end{enumerate}

We will recall the algorithm from \cite{egrinagy2024relation} by using a very simple example.
Odometers are hierarchically coupled counters.
Here we build a 4-counter from coupling two 2-counter.

\begin{example}[Building a modulo four counter  $\Z_4$  from two $\Z_2$ counters.] $\Z_4$ is
  the transformation semigroup defined by $(\{0,1,2,3\},\{^+0,^+1,^+2,^+3\})$
  (see Fig.~\ref{fig:Z4}).
  States are denoted by numbers and operations are prefixed by $^+$.
To decompose, first we construct a surjective morphism to $\Z_2$ by
$\varphi_0(0)=\varphi_0(2)=0$, $\varphi_0(1)=\varphi_0(3)=1$ on states. Thus,
$\varphi_0^{-1}(0)=\{0,2\}$ and $\varphi_0^{-1}(1)=\{1,3\}$.
On the projected bottom level we have functions with different domains.
We have two versions of $^+2$, one acting on $\{0,2\}$, the other on $\{1,3\}$.
To distinguish, we can index them by their `pinholes':
$$^+2_0=\left(\begin{matrix}
  0 & 2\\ 2& 0
\end{matrix}\right),
^+2_1=\left(\begin{matrix}
  1 & 3\\ 3& 1
\end{matrix}\right).
$$
\label{ex:Z4}
\end{example}

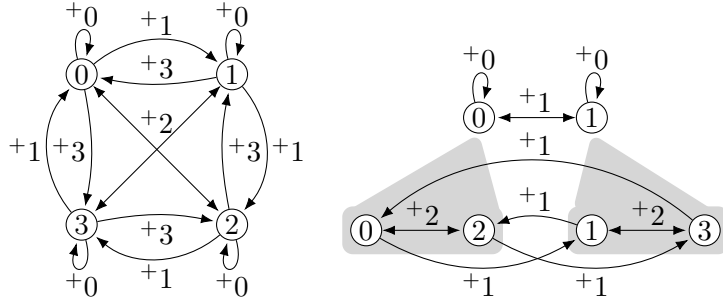
\begin{figure}
\begin{tblr}{cc}
\begin{tikzpicture}[shorten >=1pt, node distance=2cm, on grid, auto,inner sep=2pt]
  \tikzset{arr/.style={-Latex},
  rarr/.style={Latex-Latex},
  bg/.style={fill=gray!32,draw=none,rounded corners,inner sep=10pt},
  n/.style={draw,circle,inner sep=1pt,fill=white}}
  \node[n] (n0)   {$0$};
  \node[n] (n1) [right of=n0] {$1$};
  \node[n] (n2) [below of=n1] {2};
  \node[n] (n3) [below of=n0] {3};
  \path[->,every loop/.append style=arr]
  (n0) edge [arr, loop above] node {$^+0$} (n0)
  (n1) edge [arr, loop above] node {$^+0$} (n1)
  (n2) edge [arr, loop below] node {$^+0$} (n2)
  (n3) edge [arr, loop below] node {$^+0$} (n3)
  (n0) edge [arr,bend left=40] node {$^+1$} (n1)
  (n1) edge [arr,bend left=40] node {$^+1$} (n2)
  (n2) edge [arr,bend left=40] node {$^+1$} (n3)
  (n3) edge [arr,bend left=40] node {$^+1$} (n0)
  (n0) edge [arr,bend left=11,left] node {$^+3$} (n3)
  (n1) edge [arr,bend left=11,above] node {$^+3$} (n0)
  (n2) edge [arr,bend left=11,right] node {$^+3$} (n1)
  (n3) edge [arr,bend left=11,below] node {$^+3$} (n2)
  (n0) edge [rarr,above] node[yshift=4pt] {$^+2$} (n2)
  (n1) edge [rarr]  (n3);
  \end{tikzpicture}
  &
  \begin{tikzpicture}[shorten >=1pt, node distance=1.5cm, on grid, auto,inner sep=2pt]
    \tikzset{arr/.style={-Latex},
    rarr/.style={Latex-Latex},
    bg/.style={fill=gray!32,draw=none,rounded corners,inner sep=10pt},
    n/.style={draw,circle,inner sep=1pt,fill=white}}
      \node[n] (n0)   {$0$};
      \node[n] (n1) [right of=n0] {$1$};
      \node[n,below of=n0] (n22) {$2$};
      \node[n,left of=n22] (n20) {$0$};
      \node[n,below of=n1] (n21) {$1$};
      \node[n,right of=n21] (n23) {$3$};
      \begin{pgfonlayer}{background}
        \fill[bg] ($(n20.north west) +(-3pt,3pt)$) -- (n0.south) -- ($(n22.east) +(3pt,3pt)$) -- cycle;
        \node[fit=(n20) (n22), rounded corners, draw=none, bg, inner sep=3pt] (X) {};
        \fill[bg] ($(n21.north west) +(-3pt,3pt)$) -- (n1.south) -- ($(n23.east) +(3pt,4pt)$) -- cycle;
        \node[fit=(n21) (n23), rounded corners, draw=none, bg, inner sep=3pt] (Y) {};
      \end{pgfonlayer}
      \path[->,every loop/.append style=arr]
        (n0) edge [arr,loop above] node {$^+0$} (n0)
        (n1) edge [arr,loop above] node {$^+0$} (n1)
        (n20) edge [rarr] node {$^+2$} (n22)
        (n21) edge [rarr] node {$^+2$} (n23)
        (n20) edge [arr,bend right=30,below] node {$^+1$} (n21)
        (n22) edge [arr,bend right=30,below] node {$^+1$} (n23)
        (n23) edge [arr,bend right=40,above] node {$^+1$} (n20)
        (n21) edge [arr,bend right=20, above] node {$^+1$} (n22)
        (n1) edge [rarr,above] node {$^+1$} (n0);
    \end{tikzpicture}
\end{tblr}
  \caption{Decomposing $\Z_4$ counter. On the left: the original permutation group. On the right: a two-level decomposition. The bottom level is defined by the pinhole projections. To avoid clutter, the $^+3$ permutation is not shown (the arrows can be obtained by reversing the $^+1$ arrows). The issue is  the bottom level does not form a transformation semigroup.}
  \label{fig:Z4}
\end{figure}
The pinhole projections give fragments of the original semigroup that are not composable.
Consequently, we do not have a semigroup on the second level to compute with.
Moreover, this renders the iteration of the algorithm impossible.
Iterative decompositions would work in several stages.
After separating the top part, we would apply the algorithm again to the bottom component.

\subsection{`Wrong' Ways to Restore Compositionality} We can take the union of the domains and \emph{pad the transformations with identities}.
$$^+2_0=\left(\begin{matrix}
  0 & 2 & 1 & 3\\ 2& 0 & 1 & 3
\end{matrix}\right),
^+2_1=\left(\begin{matrix}
0& 2 &  1 & 3\\ 0 & 2 &3& 1
\end{matrix}\right).
$$
However, this could introduce new elements  are not present in the original semigroup. (This little example does not exhibit this though, since $^+2=\left(\begin{smallmatrix}
  0 & 2 & 1 & 3\\ 2& 0 & 1 & 3
\end{smallmatrix}\right)$ is indeed in $\Z_4$. Example \ref{ex:dual-counter}
exhibits the problem.)

We can also make the transformations \emph{partial} by adding an extra sink
state represented by $\_$.
$$^+2_0=\left(\begin{matrix}
 \_ & 0 & 2 & 1 & 3\\ \_ & 2& 0 & \_ & \_
\end{matrix}\right),
^+2_1=\left(\begin{matrix}
\_ & 0& 2 &  1 & 3\\ \_ & \_  & \_ &3& 1
\end{matrix}\right).
$$
When composing two functions with non-matching codomain and domain, the result will be the unique fully undefined transformation.
This is more like error handling.
Arguably, this extra extension does not correspond to anything in the original semigroup, and may be undesirable (e.g., `crashing' in Example \ref{ex:dual-counter}).

Alternatively, we could use the concept of \emph{variable sets} \cite{lawvere2003sets}.
In contrast with constant sets, they can have different elements based on some parameter, e.g.~time.
Here, that parameter is the state in the top component.
Though, reinterpreting the foundations of mathematics may be unwarranted for
fixing an algorithm, when there are other possibilities.

The existing computer algebra implementation \cite{SgpDec} aims to `reuse' the states by doing the pinhole projections always into the canonical $\{1,\ldots, k\}$ set.
This only provides the illusion of composability.

\subsection{Desiderata for an Iterative Decomposition Algorithm}

We need efficient decompositions (in terms of the number of states and
transformations) and we also require a well-defined bottom level component.

The 4-counter (Example \ref{ex:Z4}) shows  that simple copying alone does not produce a small component.
We expect to have 2 states, not 4 states on the second level.
Here is how we can compress the second level component.

We see  two sets $\{0,2\}$ and $\{1,3\}$ are isomorphic as sets, i.e., there is a bijection between them in the original counter.
Consequently, their permutation groups are isomorphic.
We can choose a \emph{representative}, let's say $\{0,2\}$ in this case.
At the end, we can relabel the states to 0 and 1, to have a proper-looking $\Z_2$ counter.
For now, we work with the copied states.
We can choose the operation $^+1$ for the bijection $0\mapsto 1$, $2\mapsto 3$.
The inverse bijection is $^+3$.
Let's denote them by $f$ and $f^{-1}$.

On the top level we count 1s, on the bottom level 2s.
The bottom level state set is the chosen representative.
How can we say  we are in state 0 or in state 1 in the original $\Z_4$?
We use the top level state.
How do we encode $^+2$ on $\{0,2\}$?
Regardless of the top level state, we should have a transposition in the lower level component.
If the top level state is 0, we simply apply $^+2$ that swaps $0$ and $2$.
If the top level state is 1, we apply $f\,{^+2}\,f^{-1}$, getting the same state transformation.
What happens when we move from $\{0,2\}$ to $\{1,3\}$? For instance, when applying $^+1$ with top level state 0, we move back by $f^{-1}$, leaving the states unchanged.

The important point is that the $f,f^{-1}$ pair is fixed.
We use them to figure out what happens to the representative states, no matter which subset of the states we are in.
In the beginning we could choose it to be ${^+3}$, which would change the final decomposition, but it would be isomorphic  to the canonical odometer (just counting by $^+3$ instead of $^+1$).
The idea of compression appears here: we have a representative, but we need to know how to get to the original, and that's the coordinate value above.

As the bottom level component ends up having composition only partially defined,
it makes sense to switch to this type of structure entirely for the whole algorithm.
Compression has a neat formulation there as well in terms of isomorphic objects.
Therefore, the solution is to generalize from semigroups to semigroupoids.

\section{Semigroupoids and Relational Functors}
\label{sect:sgpoids}

We can approach semigroupoids from different fields by adding or removing structure.
  \begin{description}
    \item[category theory] A semigroupoid is a category without the condition for an identity arrow for each object.
    \item[graph theory] A semigroupoid is a directed graph where arrows can be composed into paths.
    \item[algebra] A semigroupoid is a semigroup with a restricted multiplication table representing the islands of composability.
    \item[computer programs] A semigroupoid describes how typed functions in a functional programming language can be combined.
  \end{description}
The formal definition follows the first approach.
\subsection{Semigroupoids and their Transformation Representations}
The following is the definition of a category with the need for the identity arrows removed.
\begin{definition}
A \emph{semigroupoid} $\cS$ consists of
\begin{itemize}
  \item a set of \emph{objects} $\Ob(\cS)$;
  \item a set of \emph{arrows} $\cS(X,Y)$ for each $X,Y\in\Ob(\cS)$, the
    so-called \emph{hom-set};
  \item a function called \emph{composition} for each $X,Y,Z\in Ob(\cS)$
  \begin{align*}
    \cS(X,Y)\times \cS(Y,Z) &\rightarrow \cS(X,Z) \\
    (f,g) &\mapsto fg
  \end{align*}
  satisfying the \emph{associativity} condition, i.e., if $f\in\cS(X,Y)$, $g\in\cS(Y,Z)$ and $h\in\cS(Z,W)$ then
  $$f(gh)=(fg)h.$$
\end{itemize}
\end{definition}

An arrow $f\in\cS(X,Y)$ has \emph{domain} $\dom(f)=X$ and \emph{codomain} $\cod(f)=Y$.
We can write this in concise notation as $f:X\rightarrow Y$ or  $X\stackrel{f}{\rightarrow}Y$.
As an alternative notation for $\cS(X,Y)$, $\Hom_\cS(X,Y)$ emphasizes that it is a collection of homomorphisms.
We will write $\ENDO{X}$ for $\cS(X,X)$ and $\vv{XY}$ for $\cS(X,Y)$.
The name of the semigroupoid itself, like $\cS$ or $\cT$, refers to the total set of arrows.

We call the set of objects the set of formal $types$.
They are constraints on composability, determining which transformation can be put in a sequence.
Types are arguably a form of decomposition by separation.
Example \ref{ex:dual-counter} shows how typing enables precise representation of a specified behaviour.
The \emph{type of an arrow} is its domain-codomain pair.

\begin{example}[Two-object semigroupoid] Fig.~\ref{2obj6arr} shows a semigroupoid with two objects $X,Y$. Morphisms $a,b$ are of type $\ENDO{X}$, $c,d,e$ are of type $\vv{XY}$, and $f$ is of type $\ENDO{Y}$.
  \label{ex:2obj-sgpoid}
\end{example}

\begin{figure}[ht]
\begin{center}
\begin{tblr}{Q[c,m]Q[c,m]Q[c,m]} 
\begin{tikzpicture}[shorten >=1pt, node distance=2cm, on grid, auto,inner sep=2pt]
    \node[draw, circle] (X)   {$X$};
    \node[draw, circle] (Y) [right of=X] {$Y$};
    \path[->,every loop/.append style=-{Latex}]
    (X) edge [arr, loop above] node {$a$} (X)
    (X) edge [arr, loop below] node {$b$} (X)
    (X) edge [arr] node {$d$} (Y)
    (X) edge [arr,bend left=42, above] node {$c$} (Y)
    (X) edge [arr,bend right=42, below] node {$e$} (Y)
    (Y) edge [arr,loop right] node {$f$} (Y);
\end{tikzpicture}
&
\begin{tblr}{
  hline{2-8}={2-8}{0.4pt,solid},
  vline{2-8}={2-8}{0.4pt,solid}}
    & $a$ & $b$ & $c$ & $d$ & $e$ & $f$ \\
$a$ & $a$ & $b$ & $c$ & $d$ & $e$ & \\
$b$ & $b$ & $a$ & $c$ & $e$ & $d$ & \\
$c$ &  &  &  &  &  &  $c$\\
$d$ &  &  &  &  &  &  $c$\\
$e$ &  &  &  &  &  &  $c$\\
$f$ &  &  &  &  &  &  $f$ \\
\end{tblr}
&
\begin{tblr}{
hline{2-5}={2-4}{0.4pt,solid},
vline{2-5}={2-4}{0.4pt,solid}}
           & $\ENDO{X}$ & $\vv{XY}$ & $\ENDO{Y}$  \\
$\ENDO{X}$ & $\ENDO{X}$ & $\vv{XY}$ &  \\
$\vv{XY}$ &  &  & $\ENDO{XY}$ \\
$\ENDO{Y}$ &  & & $\ENDO{Y}$ \\
\end{tblr}
\end{tblr}
\end{center}
\caption{A semigroupoid with two objects and six arrows. Diagram of objects and arrows on the left, the corresponding composition table in the middle, and the simplified composition table with the types only on the right.}
\label{2obj6arr}
\end{figure}
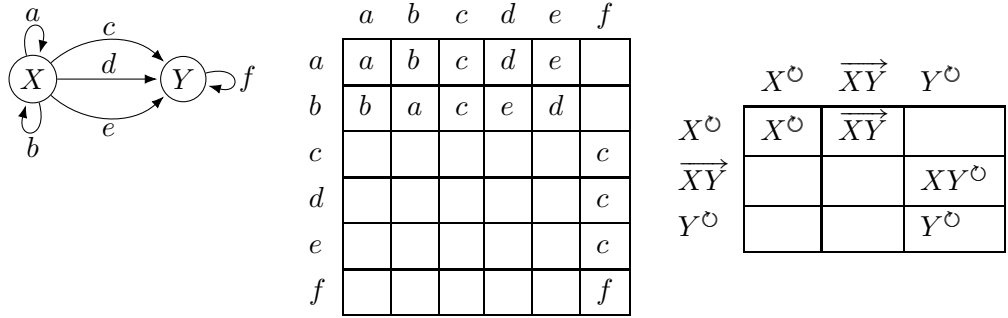

Analogously to the semigroup case, we can interpret semigroupoid arrows as
transformations of sets.
\begin{definition}[Transformation semigroupoid] A family of non-empty finite sets $X_i$, $i\in\{1,\ldots,n\}$ and transformations of type $X_i\rightarrow X_j$, $i,j\in\{1,\ldots,n\}$ closed under function composition form a transformation semigroupoid.
\end{definition}

In categorical language, we have a set-valued functor $F:\cS\rightarrow\Fin$.
The objects are mapped to sets and the arrows go to functions between those sets.
In this transformation representation, $\ENDO{X}F$ will be a set of endomorphisms of $XF$ ($F$ is assumed to be surjective onto hom-sets).
We can also call this set a \emph{stabilizer} of $XF$, as its endomorphisms form a semigroup.
They stabilize the set in the weak sense of not leaving it.
Similarly, we refer to $\vv{XY}F$
as the set of corresponding \emph{transporters} maps $XF\rightarrow YF$, but they don't form a semigroup as they are not composable by themselves.
We can imagine semigroupoids as a network of stabilizers connected by transporters.

Category theory can accommodate several levels of descriptions.
Sometimes, it can be challenging to know which level exactly are we at.
With transformation semigroupoids we have three levels.
The most abstract level consists of objects connected by composable arrows.
The composition table states the rules of composition without any explanation.
We can make this concrete by interpreting the objects as sets and the arrows as functions between them.
Then, the most specific is the level of state transitions, where we do the actual computations with the automata.
A central argument of this paper is that in order to understand state transition
systems it is enough to operate on the most abstract level.

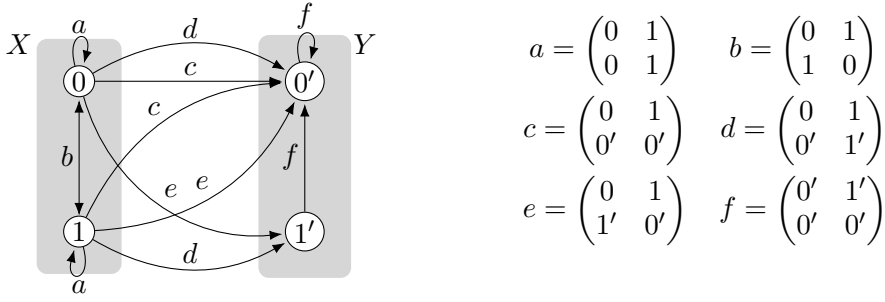
\begin{figure}[ht]
  \begin{subfigure}{.4\textwidth}
  \begin{tikzpicture}[shorten >=1pt, node distance=2cm, on grid, auto,inner sep=2pt]
    \tikzset{arr/.style={-Latex},
    rarr/.style={Latex-Latex},
    bg/.style={fill=gray!32,draw=none,rounded corners,inner sep=10pt},
    n/.style={draw,circle,inner sep=1pt,fill=white}}
    \node[n] (X0)   {$0$};
    \node[n] (X1) [below of=X0] {$1$};
    \node at (3,0) [n] (Y0)  {$0'$};
    \node[n] (Y1) [below of=Y0] {$1'$};
    \begin{pgfonlayer}{background}
      \node[fit=(X0) (X1), bg] (X) {};
    \end{pgfonlayer}
    \begin{pgfonlayer}{background}
      \node[fit=(Y0) (Y1), bg] (Y) {};
    \end{pgfonlayer}
    \node[] (X0label) [above right = 1.5cm and -.8cm of X] {$X$};
    \node[] (Y0label) [above right = 1.5cm and .8cm of Y] {$Y$};
    \path[->,every loop/.append style=arr,every edge/.append style=arr]
      (X0) edge [loop above] node {$a$} (X0)
      (X1) edge [loop below] node {$a$} (X1)
      (X1) edge [rarr] node {$b$} (X0)
      (Y0) edge [loop above] node {$f$} (Y0)
      (Y1) edge node {$f$} (Y0)
      (X0) edge node {$c$} (Y0)
      (X1) edge [bend left=30] node {$c$} (Y0)
      (X0) edge [bend left=30] node {$d$} (Y0)
      (X1) edge [bend right=30] node {$d$} (Y1)
      (X0) edge [bend right=40] node {$e$} (Y1)
      (X1) edge [bend right=30] node {$e$} (Y0);
\end{tikzpicture}
\end{subfigure}
\begin{subfigure}{.4\textwidth}
\begin{tblr}{cc}
$a=\left(\begin{matrix}0 & 1 \\ 0 & 1 \end{matrix}\right)$
&
$b=\left(\begin{matrix}0 & 1 \\1 & 0\end{matrix}\right)$ \\
$c=\left(\begin{matrix}0 & 1 \\ 0' & 0' \end{matrix}\right)$
&
$d=\left(\begin{matrix}0 & 1 \\0' & 1'\end{matrix}\right)$\\
$e=\left(\begin{matrix}0 & 1 \\ 1' & 0' \end{matrix}\right)$
&
$f=\left(\begin{matrix}0' & 1' \\0' & 0'\end{matrix}\right)$
\end{tblr}
\caption*{} 
\end{subfigure}
\caption{Transformation semigroupoid with stabilizers $a,b$ and $f$, and with
  transporters $c,d,e$.}
\label{fig:2obj6arrsgpoid}
\end{figure}

\begin{example}
We can give a transformation representation of the abstract 2-object semigroupoid in Example \ref{ex:2obj-sgpoid}.
$XF=\{0,1\}$ and $YF=\{0',1'\}$ and the transformations are listed in Fig.~\ref{fig:2obj6arrsgpoid}.
\end{example}

\emph{Generators} for semigroups are analogous to input symbols for finite
automata.
Composing them generates all possible dynamics.
For semigroupoids the situation is more involved.
In addition to the endoarrows, we need to consider all roundtrips from that
objects visiting other objects.
The process is reminiscent of the topological idea contracting loops (homotopy).
It is also the basic idea of the holonomy decomposition discussed in Section \ref{sect:holonomy}.

\subsection{Relational Functors}
To define the analogous structure preserving relations, we extend composition of arrows to sets of arrows:
$$ \{f_1,\ldots, f_n\}\{g_1,\ldots, g_m\}=\{ f_ig_j \mid f_i:X\rightarrow Y, g_j:Z\rightarrow W, Y=Z\},$$
i.e., we compose when we can, and ignore the rest.

\begin{definition}[Relational Functor]
  A \emph{relational functor} of semigroupoids  $\cS\xrightarrow{\varphi} \cT$ is a relation taking an $\cS$-arrow to a non-empty set of $\cT$-arrows, not necessarily of the same type.
  For all composable pairs of arrows $f:X\rightarrow Y$ , $g: Y\rightarrow Z$ two
  the condition of compatibility holds:
  $$\varphi_1(f)\varphi_1(g)\subseteq \varphi_1(fg),$$
  and there is at least one composite arrow in the target:
  $$ \varphi_1(f)\varphi_1(g)\neq\varnothing.$$
\end{definition}
In a sense, the arrows $\varphi_1(f)$ play the same role in $\cT$ as $f$ does in $\cS$.
The relation on arrows induces a relation on objects $\varphi_0: \Ob(\cS)\rightarrow \Ob(\cT)$.
 If there is a $\cT$-arrow with domain $X'$ related to an $\cS$-arrow with the domain $X$, then  $X\in\Ob(\cS)$ is related to $X'\in\Ob(\cT)$, and similarly for codomains.
Thus, according to the compatibility condition, the set of those arrows from
$\varphi_0(X)$ to $\varphi_0(Z)$  corresponding to the arrow $fg$ should include
all the composite arrows  factoring through $\varphi_0(Y)$, where the factors
correspond to arrows $f$ and $g$.
Factoring through a set of objects is a constraint, so there could be additional arrows in $\varphi_1(fg)$  that go some other way.
\begin{lemma}
Relational functors are composable.
\end{lemma}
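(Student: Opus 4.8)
The plan is to verify that the composite of two relational functors satisfies the defining conditions of a relational functor.

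The plan is to define the composite of two relational functors as the relational composition of their arrow-relations and then to check the three defining clauses directly. Given $\cS \xrightarrow{\varphi} \cT \xrightarrow{\psi} \cU$, I would set
$$(\varphi\psi)_1(f) = \psi_1(\varphi_1(f)) = \bigcup_{a \in \varphi_1(f)} \psi_1(a)$$
for each $\cS$-arrow $f$, so that the induced object relation is the composite $\varphi_0\psi_0$. It then remains to verify that $(\varphi\psi)_1(f)$ is non-empty, that the compatibility inclusion $(\varphi\psi)_1(f)(\varphi\psi)_1(g) \subseteq (\varphi\psi)_1(fg)$ holds, and that this composite product is non-empty, for every composable pair $f, g$ in $\cS$.

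The non-emptiness clauses I expect to be routine. Since $\varphi_1(f) \neq \varnothing$ and each $\psi_1(a) \neq \varnothing$, the union $(\varphi\psi)_1(f)$ is non-empty. For the non-emptiness of the composite product I would chain the two hypotheses: compatibility of $\varphi$ yields a composable pair $a \in \varphi_1(f)$, $b \in \varphi_1(g)$ with $ab \in \varphi_1(fg)$; because $a, b$ are then composable in $\cT$, compatibility of $\psi$ supplies composable $u \in \psi_1(a)$, $v \in \psi_1(b)$ with $uv \in \psi_1(ab)$. This $uv$ lies in $(\varphi\psi)_1(f)(\varphi\psi)_1(g)$ and, since $ab \in \varphi_1(fg)$, also in $(\varphi\psi)_1(fg)$, settling non-emptiness and exhibiting a genuine witness inside the target.

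For compatibility I would take an arbitrary element of the composite product, that is a composite $uv$ with $u \in \psi_1(a)$ for some $a \in \varphi_1(f)$ and $v \in \psi_1(b)$ for some $b \in \varphi_1(g)$. Provided $a$ and $b$ are composable in $\cT$, two applications of the lax inclusion finish the argument: $ab \in \varphi_1(f)\varphi_1(g) \subseteq \varphi_1(fg)$ by compatibility of $\varphi$, and $uv \in \psi_1(a)\psi_1(b) \subseteq \psi_1(ab) \subseteq \psi_1(\varphi_1(fg)) = (\varphi\psi)_1(fg)$ by compatibility of $\psi$.

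The hard part will be exactly the clause ``provided $a$ and $b$ are composable,'' which is where the typed setting departs from the classical semigroup proof, in which every product is defined. A composite $uv$ in $\cU$ can arise from witnesses $a, b$ whose middle objects $\cod(a)$ and $\dom(b)$ are merely both related, under the induced object relation, to the single shared object $\cod(u) = \dom(v)$, rather than being equal; then $ab$ need not exist and the compatibility of $\varphi$ does not apply. I would handle this by bookkeeping the induced object relations together with the ``compose when we can, ignore the rest'' convention for sets of arrows: the set product already discards non-composable pairs, so the claim reduces to showing that whenever $u$ and $v$ compose one can select witnesses $a, b$ that also compose. I expect this to go through because the object relations behave functionally enough on the objects that actually occur as domains and codomains of the composable arrows, and confirming that a composable choice of witnesses always exists is the step I would scrutinize most carefully.
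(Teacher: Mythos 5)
Your route coincides with the paper's for everything except the step you postponed, and that step is precisely where the argument breaks. The easy clauses you handle correctly (and more completely than the paper, which only addresses the inclusion and never checks the two non-emptiness conditions). But the claim you defer to final scrutiny --- that whenever $u\in\psi_1(a)$, $v\in\psi_1(b)$ compose in $\cU$ one can re-select witnesses $a,b$ that compose in $\cT$ --- is not a bookkeeping matter: under the stated definitions it is false. Concretely, let $\cS$ have one object and a single idempotent arrow $s$ (so $ss=s$); let $\cT$ have two objects carrying idempotent loops $t_1,t_2$ and no arrows between the objects; let $\cU$ have one object whose endoarrows form the three-element semilattice $\{u_1,u_2,m\}$ with $u_1u_1=u_1$, $u_2u_2=u_2$, $u_1u_2=u_2u_1=m$. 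Then $\varphi_1(s)=\{t_1,t_2\}$ is a relational functor, since the set-product convention ignores the non-composable cross pairs and gives $\varphi_1(s)\varphi_1(s)=\{t_1,t_2\}\subseteq\varphi_1(ss)$; and $\psi_1(t_i)=\{u_i\}$ is a relational functor, since the only composable pairs in $\cT$ are $(t_i,t_i)$. Yet the composite sends $s$ to $\{u_1,u_2\}$, and $\{u_1,u_2\}\{u_1,u_2\}=\{u_1,u_2,m\}\not\subseteq\{u_1,u_2\}=(\varphi\psi)_1(ss)$. The offending element $m$ arises \emph{only} from the non-composable witness pair $(t_1,t_2)$, so no alternative composable choice of witnesses exists, and compatibility of the composite genuinely fails.

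You should know that the paper's own proof is silent on exactly this point: it picks arbitrary $f'\in\varphi(f)$, $g'\in\varphi(g)$, invokes $\tau(f')\tau(g')\subseteq\tau(f'g')$ --- which presupposes that $f'g'$ is defined --- and then takes unions; but the union on the left also contains composites coming from non-composable pairs $(f',g')$, about which the hypothesis on $\tau$ says nothing. So your instinct to flag the witness-selection step as the crux was correct; the outcome of that scrutiny, however, is negative rather than a harder sub-argument. Closing the gap requires strengthening the hypotheses, not the proof: for instance, requiring the induced object relation to have disjoint images on distinct objects (then $\cod(u)=\dom(v)$ forces $\cod(a)=\dom(b)$, and your two-step inclusion goes through verbatim), or tracking witnesses by composing on graphs of relations rather than on their images. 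As written, your plan stalls at a step that fails, so the proposal is incomplete.
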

\begin{proof}
 Let $\varphi: \cS\rightarrow\cT$ and $\tau:\cT\rightarrow\cU$ be relational functors.
 We need to show that $\tau(\varphi(f))\tau(\varphi(g))\subseteq\tau(\varphi(fg))$.
Let $f'\in\varphi(f)$ and $g'\in\varphi(g)$ be arbitrary picked arrows in
$\cT$.
Since, $\tau$ is a relational functor, $\tau(f')\tau(g')\subseteq\tau(f'g')$.
This will be true for the finite union of sets on both sides (going through all
possible $f'$ and $g'$).
\end{proof}

A relational functor $\varphi:\cS\rightarrow\cT$ is \emph{surjective} if
$\bigcup_{f\in\cS}\varphi(f)=\cT$, denoted by $\twoheadrightarrow$.
If $\varphi(f)\cap\varphi(g)\neq\varnothing\implies f=g$, then we have an
\emph{injective} relational functor, denoted by $\hookrightarrow$.
This is also called an \emph{emulation} or \emph{covering}.
It expresses that $\cT$ is at least as computationally powerful as $\cS$.
Since the image sets do not overlap, it is always well-defined what computation
of $\cS$ is represented in $\cT$.

\section{Cascade Decomposition of Semigroupoids}
\label{sect:sgpoiddec}

We apply the Covering Lemma method to (abstract) semigroupoids.
We only need to do pinhole projections for arrows, unlike in the case of transformation semigroups, where we need to make those projections for states and transformations.
First, we define a simple product and then apply compression to get the bottom level component, the generalized kernel.
Finally, we will prove that the hierarchical product resulting from compression emulates to original semigroupoid.

\subsection{The Tracing Product}
As a trivial limit case for the hierarchical product, we use a construction that is a subcategory of the direct product of categories.
The top level, the image of the surjective relation functor $\varphi$, gives a summary of, while the bottom level gives the complete dynamics of $\cS$.
The components are independent due to this redundancy.

\begin{definition}[Tracing product]
  Given a relational functor $\varphi:\cS\rightarrow \cT$, the \emph{tracing
    product} denoted by $\cT\times_{\varphi}\cS$, is the semigroupoid where the
  arrows are the elements of the graph of $\varphi^{-1}:\cT\rightarrow\cS$:
  $$\cT\times_{\varphi}\cS=\bigcup_{f\in\cT}\{f\}\times\varphi^{-1}(f).$$
Composition is done independently: $(f,a)(g,b)=(fg,ab)$ when both pairs, $f,g$ and $a,b$, are composable.
\end{definition}
In other words, $\cT$ is an annotation for $\cS$.
It gives a coarse-grained view, while on the bottom level we trace the actual path taken.
Composition is consistent as the levels are set up to be  synchronized.
The top level $\cT$ can be interpreted as the abstract specification of the correctness for $\cS$, $\varphi$ as the proof of correctness, and the tracing product as the tool for verification.
Not surprisingly, the tracing product can emulate the original semigroupoid.
\begin{lemma}
$\cS\hookrightarrow\cT\times_{\varphi}\cS$ for a surjective relational functor $\varphi:\cS\twoheadrightarrow \cT$.
\end{lemma}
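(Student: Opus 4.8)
The plan is to exhibit an explicit injective relational functor $\psi:\cS\to\cT\times_\varphi\cS$ realizing the emulation. The natural candidate is the map that records an $\cS$-arrow together with all of its $\varphi$-images, namely
$$\psi(a)=\{(f,a)\mid f\in\varphi(a)\}.$$
First I would check that this is well defined as a relation into the tracing product: by definition $f\in\varphi(a)$ is the same as $a\in\varphi^{-1}(f)$, so each pair $(f,a)$ is genuinely an arrow of $\cT\times_\varphi\cS$; and since $\varphi(a)\neq\varnothing$ for every $\cS$-arrow $a$, the set $\psi(a)$ is non-empty, as required of a relational functor.

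Next I would verify the compatibility and non-triviality conditions. Given composable $a:X\to Y$ and $b:Y\to Z$ in $\cS$, composition in the tracing product is component-wise, so
$$\psi(a)\psi(b)=\{(fg,ab)\mid f\in\varphi(a),\,g\in\varphi(b),\,fg\text{ defined}\}.$$
The $\cS$-component is always $ab$, and the set of $\cT$-components that appears is exactly $\varphi(a)\varphi(b)$. The compatibility axiom for $\varphi$ gives $\varphi(a)\varphi(b)\subseteq\varphi(ab)$, so every such $fg$ lies in $\varphi(ab)$ and hence $(fg,ab)\in\psi(ab)$; this yields $\psi(a)\psi(b)\subseteq\psi(ab)$. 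The non-emptiness clause $\varphi(a)\varphi(b)\neq\varnothing$ for $\varphi$ supplies at least one composable pair $f,g$, whence $\psi(a)\psi(b)\neq\varnothing$. Thus $\psi$ is a relational functor. Implicitly, this same computation shows the tracing product is closed under composition, confirming that it really is a semigroupoid.

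Finally I would check injectivity, which is where the redundancy built into the tracing product pays off. Every element of $\psi(a)$ carries $a$ as its second coordinate, so if $\psi(a)\cap\psi(b)\neq\varnothing$ then some pair has second coordinate equal to both $a$ and $b$, forcing $a=b$. Hence $\psi$ is injective in the sense defining $\hookrightarrow$, and therefore $\cS\hookrightarrow\cT\times_\varphi\cS$.

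I do not expect a genuine obstacle here: the construction is essentially forced once one notices that the second coordinate of the tracing product literally stores the original $\cS$-arrow. The only point requiring care is the composability convention in the tracing product — composing $(f,a)$ with $(g,b)$ demands that \emph{both} $f,g$ and $a,b$ be composable — so the containment and non-emptiness claims must be matched against exactly the two axioms of a relational functor rather than against ordinary functoriality. I would also remark that surjectivity of $\varphi$ is not actually needed for the emulation itself; it belongs to the surrounding decomposition setup, where it guarantees that every arrow of $\cT$ occurs as a first coordinate.
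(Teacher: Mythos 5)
Your proposal is correct and is essentially identical to the paper's own proof: your $\psi(a)=\{(f,a)\mid f\in\varphi(a)\}$ is exactly the paper's relational functor $\tau(a)=\{(x,y)\mid y=a\}$ (the inverse of the second-coordinate projection), and you verify the same three points --- injectivity from the stored second coordinate, $\psi(a)\psi(b)\subseteq\psi(ab)$ from $\varphi(a)\varphi(b)\subseteq\varphi(ab)$, and non-emptiness of composites from $\varphi(a)\varphi(b)\neq\varnothing$. Your closing observation that surjectivity of $\varphi$ is not used in the argument is also accurate; the paper's proof likewise never invokes it, as it only matters for the surrounding decomposition construction where $\cT$ serves as the top-level component.
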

\begin{proof}
Let $\tau$ be the relational functor projecting the tracing product to its second coordinate: $\tau(a)=\{(x,y)\mid y=a\}$, the set of all arrows in the product with second coordinate $a$.
It is injective, since $\tau(a)\cap\tau(b)\implies a=b$.

If $a$ and $b$ are composable in $\cS$, then $\tau(a)\tau(b)$ is guaranteed to have a composable pair, since $\varphi(a)\varphi(b)\neq\varnothing$.
We only need to check the top level, since the bottom levels are composable by assumption.

Similarly, $\tau(a)\tau(b)\subseteq\tau(ab)$ follows from $\varphi(a)\varphi(b)\subseteq\varphi(ab)$.
\end{proof}

\subsection{Compression and the Kernel of a Relational Functor}
When a pattern appears several times, it is enough to store it once and record the locations of the multiple occurrences.
We will use this principle of compression to reduce the size of a semigroupoid, whenever the same dynamics (set of arrows) appears several times.
First, we look at individual arrows, when two of them can be expressed by each other.
\begin{definition}
In a semigroupoid $\cS$ two arrows $f: X\rightarrow Y$ and $g: Z\rightarrow U$
are \emph{equivalent}, or \emph{interchangeable} if there exist arrows $\TO{X}{Z}$, $\TO{Z}{X}$,
$\TO{Y}{U}$,  and $\TO{U}{Y}$ such that $f=\TO{X}{Z}g\TO{U}{Y}$ and $g=\TO{Z}{X}f\TO{Y}{U}$, so that the following diagram commutes.
\begin{center}
\begin{tikzcd}
X \arrow[rrr, "f",arr]  \arrow[d, "\TO{X}{Z}"',arr, bend right=30]  & & & Y \arrow[d, "\TO{Y}{U}"',arr, bend right=30] \\
Z \arrow[rrr, "g"', arr]  \arrow[u, "\TO{Z}{X}"',arr, bend right=30] & & & U  \arrow[u, "\TO{U}{Y}"',arr, bend right=30]
\end{tikzcd}
\end{center}
Moreover, $\TO{X}{Z}\TO{Z}{X}f=f\TO{Y}{U}\TO{U}{Y}=f$ (and similarly for $g$), which is just a roundabout
way of saying (due to identity arrows lacking in semigroupoids) that the pairs of maps are \emph{inverses} of each other.

Any subsets of the four objects $X,Y,Z,U$ can be identified.
Equivalence is defined for endoarrows as well.
Indeed, for semigroups (as single object arrows) this relation is the same as the $\cD$-relation, one of the famous Green's relations in semigroup theory \cite{Howie95}.
\end{definition}

We extend this equivalence relation to sets of arrows (the preimages of $\varphi$).
For a set of arrows $P\subseteq\cS$, the set of objects that appear in $P$
as a domain or a codomain is denoted by $\Ob|_P(\cS)$.

\begin{definition}
The sets $P$ and $Q$ of arrows of $\cS$ are \emph{equivalent} if their
supporting sets of objects are isomorphic as sets, i.e., $\Ob|_P(\cS)\cong\Ob|_Q(\cS)$
in $\cS$.
More precisely, we have families of arrows
$\TO{P}{Q}:\Ob|_P(\cS)\rightarrow\Ob|_Q(\cS)$ and
$\TO{Q}{P}:\Ob|_Q(\cS)\rightarrow\Ob|_P(\cS)$ inducing two bijections in opposite directions between $P$ and
$Q$.
Moreover, the arrows corresponding to pairs of objects are inverses relative to
the arrows of $P\cup Q$.
\end{definition}

For a family of equivalent sets of arrows we pick a
representative naturally.
Natural means that the choice does not matter.
They all lead to the same construction.
We denote the representative by $\ast$, thus for a set of arrows $P$, we have
the arrows $\TOREP{P}$  to the representative, and arrows from the representative $\FROMREP{P}$.
Care must be taken for arrows with no other equivalent arrows.
These maps may not exist, since identity arrows are not guaranteed in semigroupoids.

\begin{definition}[Kernel of a semigroupoid relational functor]
Given a relational functor $\varphi:\cS\rightarrow \cT$, the \emph{kernel}
$\cK_\varphi$ is the semigroupoid $\cS$ with equivalent preimages identified.
Formally, $\cK_\varphi=\bigcup_{f\in\cT}[\varphi^{-1}(f)]$, where the square bracket indicates the equivalence class representative.
\end{definition}

The standard definition of the kernel is the preimage of the identity. That
works when it
is possible to recover all the other collapsings of the morphism (e.g., in
groups).
This kernel is a collection of all distinct collapsings of a given relational functor.
We need to know the preimages of all the arrows of $\cT$, except when there are
equivalent preimages, we only keep the representative.
In, general, $\cK_\varphi$ can be bigger than $\cS$ due to the overlapping image
sets when
$\varphi$ is not injective.
Note that the compression is not the same as the skeleton of the semigroupoid,
as not everything collapsible is collapsed by $\varphi$.

\subsection{Emulation by the Pinhole Cascade Product}
Now we can state and prove the main result: putting together the image of a surjective relational
functor with its kernel in a hierarchical way is just the compressed version of
the tracing product, thus it emulates the original semigroupoid.
\begin{definition}[Pinhole cascade product]
Given a relational functor $\varphi:\cS\rightarrow \cT$ the \emph{pinhole cascade product} $\cT\wr_{\varphi}\cK_\varphi$ is the
semigroupoid with the arrows
$\bigcup_{f\in\cT}\{f\}\times\left[\varphi^{-1}(f)\right]$.
Composition for $(f,a)$ and $(g,b)$ is defined as
$$(f,a)(g,b)= \left(fg,\FROMREP{fg}(\TOREP{f}a\FROMREP{f})(\TOREP{g}b\FROMREP{g})\TOREP{fg}\right).$$
For readability, we define the encoding process of taking  arrows of $\varphi^{-1}(f)$ to their representatives.
If the arrow $\square$ has at least one other equivalent arrow, then $\enc_f:\square\mapsto\FROMREP{f}\square\TOREP{f}$  otherwise, $\enc_f(\square)=\square$.
Decoding is defined similarly. When $\square'$ is interchangeable,
$\dec_f:\square'\mapsto \TOREP{f}\square'\FROMREP{f}$, if not interchangeable,
then $\dec_f(\square')=\square'$.
Encoding composed with decoding, and vice versa, yield the identity on the arrows in their scope.
Now the composition rule can be stated more succinctly:

$$(f,a)(g,b)= \left(fg,\enc_{fg}\left(\dec_f(a)\dec_g(b)\right)\right).$$
Note that the bottom level composition can also expressed as $\enc_{fg}(\dec_f(a))\enc_{fg}(\dec_g(b))$.
\end{definition}

The bottom level composition depends on the top level coordinates, but not the other way around.
Therefore, this embeds into the wreath product.
The only control information for passed for the top to bottom is to which `model' of the preimage to use.
Therefore, it is better not to start with the wreath product (as in most
mathematical texts), where all possible control signals appear.
If there is no compression, then composition in the top and bottom levels are
independent, then we only have a tracing product (see Example \ref{ex:nocompression}).

\begin{theorem}\label{thm:sgpoiddec}
If $\varphi: \cS\twoheadrightarrow\cT$ is a surjective relational functor then
the pinhole cascade product $\cT\wr_{\varphi}\cK_\varphi$ emulates $\cS$, i.e., $\cS\hookrightarrow\cT\wr_{\varphi}\cK_\varphi$.
\end{theorem}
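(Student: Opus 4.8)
The plan is to obtain $\cS\hookrightarrow\cT\wr_{\varphi}\cK_\varphi$ as a composite of two emulations, reusing the already established tracing-product emulation $\cS\hookrightarrow\cT\times_{\varphi}\cS$. The pinhole cascade product is, by design, the tracing product with each traced preimage replaced by its class representative, so the remaining emulation I would establish is the compression step $\cT\times_{\varphi}\cS\hookrightarrow\cT\wr_{\varphi}\cK_\varphi$. Since relational functors are composable and, as I note below, a composite of injective ones stays injective, chaining the two gives the theorem. Injectivity of a composite $\tau\circ\varphi$ is quick: if $\tau(\varphi(f))\cap\tau(\varphi(g))\neq\varnothing$ then $\tau(f')\cap\tau(g')\neq\varnothing$ for some $f'\in\varphi(f)$, $g'\in\varphi(g)$, whence $f'=g'$ by injectivity of $\tau$ and then $f=g$ by injectivity of $\varphi$.

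For the compression emulation I would take the single-valued relational functor
\[
\psi\colon\cT\times_{\varphi}\cS\longrightarrow\cT\wr_{\varphi}\cK_\varphi,\qquad \psi(f,a)=\bigl(f,\enc_f(a)\bigr),
\]
where $a\in\varphi^{-1}(f)$ and $\enc_f$ sends $a$ to the corresponding arrow of the representative set $[\varphi^{-1}(f)]$. Because $\enc_f$ is a bijection of $\varphi^{-1}(f)$ onto $[\varphi^{-1}(f)]$ with inverse $\dec_f$, injectivity of $\psi$ is immediate: equal images force equal top coordinates $f$, and then $\enc_f(a)=\enc_f(a')$ gives $a=a'$. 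The core check is that $\psi$ respects composition; using $\dec_f\circ\enc_f=\mathrm{id}$ on its scope, the cascade product of images unwinds to
\[
(f,\enc_f(a))(g,\enc_g(b))=\bigl(fg,\ \enc_{fg}(\dec_f(\enc_f(a))\,\dec_g(\enc_g(b)))\bigr)=\bigl(fg,\ \enc_{fg}(ab)\bigr)=\psi\bigl((f,a)(g,b)\bigr),
\]
so compatibility holds with equality, and non-emptiness is automatic since the cascade composite is defined exactly when the tracing composite $(fg,ab)$ is. Here one uses that $ab\in\varphi^{-1}(fg)$, which follows from $fg\in\varphi(a)\varphi(b)\subseteq\varphi(ab)$, so that $\enc_{fg}$ is indeed applicable to $ab$.

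I expect the genuine obstacle to lie in the bookkeeping hidden inside $\enc$ and $\dec$ rather than in the algebra above. One must verify that all intermediate arrows typecheck — that the transporter families $\FROMREP{f},\TOREP{f}$ compose with $a$ and with the representative arrows so that $\enc_f(a)$ and $\dec_f$ land on the correct supports — and, crucially, that everything degrades gracefully for arrows possessing no equivalent partner, where $\FROMREP{f},\TOREP{f}$ need not exist and $\enc_f,\dec_f$ are defined to be the identity. The identities $\dec_f\circ\enc_f=\mathrm{id}$ and $\enc_f\circ\dec_f=\mathrm{id}$ must hold uniformly across compressed and uncompressed arrows, and the containment $\dec_f(a)\dec_g(b)\in\varphi^{-1}(fg)$ must survive the mixed case. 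As an alternative to the two-step route, one could build a single injective relational functor $\rho\colon\cS\to\cT\wr_{\varphi}\cK_\varphi$ directly by $\rho(s)=\{(f,\enc_f(s))\mid f\in\varphi(s)\}$; the same computation shows $\rho(s)\rho(t)\subseteq\rho(st)$ with the composites nonempty, and injectivity again follows from $\enc_f$ being a bijection on each preimage set.
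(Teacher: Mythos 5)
Your proposal is correct and takes essentially the same route as the paper: the paper's proof likewise factors through the tracing product, showing via the same computation $(f,\enc_f(a))(g,\enc_g(b))=(fg,\enc_{fg}(ab))$ (with the matching $\enc$/$\dec$ pairs cancelling) that the pinhole cascade is the tracing product in encoded coordinates, and then invokes the lemma $\cS\hookrightarrow\cT\times_{\varphi}\cS$. Your extra bookkeeping — the injectivity of composites of injective relational functors, the explicit map $\psi(f,a)=(f,\enc_f(a))$, and the check that $ab\in\varphi^{-1}(fg)$ — only makes explicit what the paper leaves implicit.
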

\begin{proof}
We need to show that the uncompressed pinhole cascade is identical to the corresponding tracing product, $\cT\wr_{\varphi}\cK_\varphi=\cT\times_\varphi\cS$, by giving an identity bijection.
Let us pick two arrows $(f,a)$ and $(g,b)$ in the tracing product.
In the pinhole cascade, the lower level coordinates need to be encoded.
The encoded coordinate pairs are $(f,\enc_f(a))$ and $(g,\enc_g(b))$.
After composition, we have $(fg, \enc_{fg}(\dec_f(\enc_f(a))\dec_g(enc_g(b))))$.
The matching encoding-decoding pairs cancel.
The result is $(fg, \enc_{fg}(ab))$,
assuming $ab$ is a composite arrow in $\cS$.
After decoding, we have $(fg,ab)$.
\end{proof}
To summarize, the coordinate value arrows are encoded, but composition takes
place in the original semigroupoid, so they have to be decoded.
The usability of the decomposition depends on how easy is to interpret the
chosen representative.
In our positional number notation system, which is a cascade product of
$\Z_{10}$'s, they are particularly meaningful (see Example \ref{ex:statelessZ4}
for the binary case).
A remaining challenge for the applications of algebraic automata theory is the
construction of simple hierarchical rule tables for more general discrete
dynamical systems.

\section{Holonomy Decomposition}
\label{sect:holonomy}
In Krohn-Rhodes theory, the holonomy method for cascade decomposition was originally
developed by H. Paul Zeiger \cite{zeiger67a,zeiger68}, and subsequently improved by
S. Eilenberg \cite{eilenberg} (one of the founders of category theory), and later by several others \cite{automatanetworks2005,ginzburg_book68,
holcombe_textbook,Maler2010}.
It is also defined  for categories \cite{KRTforCategories}, closely following the transformation representation.

The term `holonomy' is borrowed from differential geometry, since a roundtrip of composed bijective maps producing permutations is analogous to moving a vector via parallel transport along a smooth closed curve yielding change of the angle of the vector.

The holonomy decomposition is defined for transformation representations.
When we forget that interpretation, its description becomes similar to the
Covering Lemma.
\begin{definition}The set $\imgs_S(X)=\{X\cdot s\mid s\in S\}$ is the \emph{image set} of the transformation semigroup $(X,S)$.
\end{definition}
The holonomy method works by the detailed examination of how $S$ acts on
$\imgs_S(X)$ by \emph{considering each image set as a separate type}.
Thus, we have a semigroupoid with objects $\imgs_S(X)$ and arrows defined by the
elements of $S$ restricted to the image sets.
This semigroupoid is potentially lot bigger than the original semigroup (see
Example \ref{ex:Tn}).
Two sets are equivalent if they have bijections between them in $S$.
This is where compression comes in.
The encoding and decoding processes are the same.

The algorithm differs as holonomy aims for the highest resolution decomposition
complete with all possible compressions. Hence the need for the subduction
relation, tiling, height and depth calculations.
Also, the holonomy decomposition requires a specific surjective relational morphism to start with.
We map states to subsets $x\mapsto X\setminus \{x\}$, permutations to themselves and any other transformations to constant maps to states not in their images \cite{FromRelToEmulationNehaniv1996}.
To have an iterative holonomy decomposition, these type of morphisms need to be generalized to relational functors of semigroupoids.

The proofs also have different styles.
The holonomy method constructs an elaborate decomposition, and then proves the emulation.
The Covering Lemma starts from the emulation and uses it as a constraint
and works out the details.

\section{Conclusions and Further Research Directions}
With the intention of fixing issues in transformation semigroup decomposition methods, we formulated the core algorithm on the more abstract, categorical level.
This yielded three distinct results:
\begin{enumerate}
  \item The Covering Lemma decomposition algorithm defined for semigroupoids has no deficiencies: it produces a well-defined bottom level component which is also a semigroupoid and thus iteration is possible.
  \item We deepened our understanding of hierarchical decompositions of computation by identifying three conceptual steps: collapse, copy, compress. This, in turn, showed that the only feedforward control signal is the position in the equivalence class. Therefore, we do not need to work with the combinatorially explosive wreath product.
  \item The abstract algorithm allows hierarchical decompositions of computation in a representation independent way, generalizing traditional state transition systems to other forms of computation.
\end{enumerate}
All these open up new directions for research.
Possibly the biggest impact is due to the representation agnostic decomposition algorithm.
We can take all finite diagram semigroups (i.e., subsemigroups of the partitioned binary relations \cite{Martin_Mazorchuk_2013}, including binary relations, partial transformations and permutations, Brauer monoids, and Temperley-Lieb/Jones monoids), and we now have a free Krohn-Rhodes Theorem \cite{primedecomp65} type of decomposition for each representations.
Instead of working out the details for each, we can do the decompositions abstractly, and see how they are realized in that particular form of computation.

Semigroups model computation by emphasizing composition.
Semigroupoids introduce type and see abstract computation as a network between the islands of composability.
This typed view reshapes some old problems.

Understanding is provided by decompositions, but the decomposition algorithm we described requires a surjective morphism to start with.
Now, we have the question: `What are the relational functors from a semigroupoid?'.
A systematic description is needed to make the holonomy algorithm iterative too.
In holonomy, we need the smallest collapsing morphic relations in order to produce the highest resolution decompositions.

Given an abstract semigroupoid, what are its minimal degree transformation representations?
This question is already far from trivial for semigroups (see e.g., \cite{CameronEFMPQ23}).
At least, we can have the right regular representation to start with if we have an identity for the
monoids.
For semigroupoids, we need to find other algorithms to produce those representations.

\bibliographystyle{plainnat}
\bibliography{../coords.bib}

\appendix

\section{Additional Examples}
\begin{example}[Dual-mode counter]
\label{ex:dual-counter}
We want to build a machine with two distinct modes.
One for a binary counter, the other one for a mod-3 counter.
Two switch operations change the modes and reset the counter upon switching.
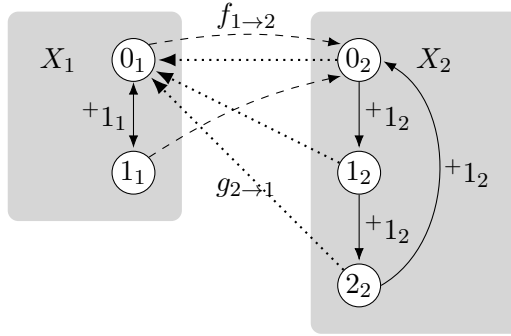
\begin{figure}[h]
\begin{tikzpicture}[shorten >=1pt, node distance=1.5cm, on grid, auto,inner sep=2pt]
\tikzset{arr/.style={-Latex},
         rarr/.style={Latex-Latex},
         bg/.style={fill=gray!32,draw=none,rounded corners,inner sep=10pt},
         n/.style={draw,circle,inner sep=1pt,fill=white}}
    \node[n] (X10) {$0_1$};
    \node[n] (X11) [below of=X10] {$1_1$};
    \node at (3,0) [n] (X20)  {$0_2$};
    \node[n] (X21) [below of=X20] {$1_2$};
    \node[n] (X22) [below of=X21] {$2_2$};
    \node[] (X1label) [left = 1cm of X10] {$X_1$};
    \node[] (X2label) [right = 1cm of X20] {$X_2$};
    \path[->,every loop/.append style=arr,every edge/.append style=arr]
      (X10) edge [rarr] node[left] {$^+1_1$} (X11)
      (X20) edge node {$^+1_2$} (X21)
      (X21) edge node {$^+1_2$} (X22)
      (X22.east) edge [bend right=60] node[right] (label) {$^+1_2$} (X20.east)
      (X10.north east) edge [dashed, bend left=10] node {$f_{1\rightarrow 2}$} (X20.north west)
      (X11.north east) edge [dashed, bend left=10] (X20.south west)
      (X20) edge[dotted,thick] (X10)
      (X21) edge[dotted,thick] (X10)
      (X22) edge[dotted,thick] node [below] {$g_{2\rightarrow 1}$}(X10);
      \begin{pgfonlayer}{background}
        \node[fit=(X20) (X22) (label), bg] {};
      \end{pgfonlayer}
      \begin{pgfonlayer}{background}
        \node[fit=(X10) (X11) (X1label), bg] {};
      \end{pgfonlayer}
  \end{tikzpicture}
  \caption{Dual-mode 2-3-counter transformation semigroupoid.
  Only generator transformations shown. Dashed and dotted arrows are used to avoid excessive labelling.}
\end{figure}
The semigroupoid has two types $X_1$ and $X_2$, represented as sets $\{0_1,1_1\}$ and $\{0_2,1_2,2_2\}$.
The generator transformations are
$$^+1_1=\left(\begin{matrix}
  0_1 & 1_1 \\
  1_1 & 0_1
\end{matrix}\right),
^+1_2=\left(\begin{matrix}
  0_2 & 1_2 & 2_2\\
  1_2 & 2_2 & 0_2
\end{matrix}\right),
f_{1\rightarrow 2}=\left(\begin{matrix}
  0_1 & 1_1 \\
  0_2 & 0_2
\end{matrix}\right),
g_{2\rightarrow 1}=\left(\begin{matrix}
  0_2 & 1_2 & 2_2\\
  0_1 & 0_1 & 0_1
\end{matrix}\right).
$$
There is an identity for type $X_1$ by $(^+1_1)^2=^+0_1$.
In $X_2$, $^+1_2$ generates $^+2_2$ and $^+0_2$, the identity for $X_2$.
The switching maps are constant functions, therefore they do not transfer any of the permutation dynamics, they only create the constant maps.
The semigroupoid has 15 transformations.

Can we represent these transformations as a transformation semigroup?
There are 5 states in total, thus we can try to embed it into the full transformation semigroup $T_5$ by padding with identities.
However, this would generate more elements.
The composition of two generator counting operations would produce the
transformation
$^+1_1^+1_2=\left(\begin{smallmatrix}0_1 & 1_1 & 0_2 & 1_2 & 2_2\\
  1_1&0_1& 1_2 & 2_2 &0_2\end{smallmatrix}\right)$.
By taking powers,this generates an orbit with 6 elements: $^+1_1^+1_2$, $^+0_1^+2_2$,
$^+1_1^+0_2$,$^+0_1^+1_2$, $^+1_1^+2_2$, and $^+0_1^+0_2$.
However, we did not design the machine to count up to 6.

We can try to embed into $T_6$ by adding a sink state representing partial transformations.
Beyond adding one more state, this construction would allow `crashing' the
machine by providing invalid (untyped) inputs.
Again, by design, we may need to avoid this behaviour.
Using the stereotypical example of a vending machine, in most cases we do not
want to have a sequence of operations that takes the machine into an inescapable
useless state.

Compared to untyped transformation semigroups, transformation semigroupoids allow more precise and efficient expressions of computing structures.
By increasing the number of states, we can always find an equivalent untyped representation, thus they have the same computational power (in terms of recognizing languages).
\end{example}

\begin{example}[Communicating vessels -- Transferring dynamics]
Objects connected by isomorphisms, and thus sets of states with bijective maps between have the same semigroup.
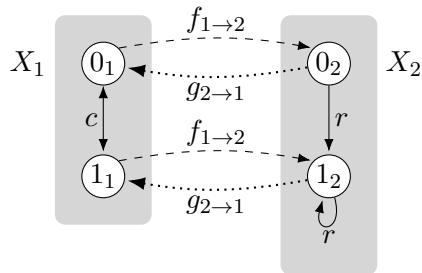
\begin{figure}[h]
  \begin{tikzpicture}[shorten >=1pt, node distance=1.5cm, on grid, auto,inner sep=2pt]
    \tikzset{arr/.style={-Latex},
             rarr/.style={Latex-Latex},
             bg/.style={fill=gray!32,draw=none,rounded corners,inner sep=10pt},
             n/.style={draw,circle,inner sep=1pt,fill=white}}
    \node[n] (X10)   {$0_1$};
    \node[n] (X11) [below of=X10] {$1_1$};
    \node at (3,0) [n] (X20)  {$0_2$};
    \node[n] (X21) [below of=X20] {$1_2$};
    \node[] (X1label) [left = 1cm of X10] {$X_1$};
    \node[] (X2label) [right = 1cm of X20] {$X_2$};
    \path[->,every loop/.append style=arr,every edge/.append style=arr]
      (X10) edge [rarr] node[left] {$c$} (X11)
      (X20) edge node {$r$} (X21)
      (X21) edge [loop below] node (lab) {$r$} (X21)
      (X10.north east) edge [dashed, bend left=10] node {$f_{1\rightarrow 2}$} (X20.north west)
      (X11.north east) edge [dashed, bend left=10] node {$f_{1\rightarrow 2}$} (X21.north west)
      (X20) edge[dotted,thick,bend left=10] node {$g_{2\rightarrow 1}$} (X10)
      (X21) edge[dotted,thick,bend left =10] node {$g_{2\rightarrow 1}$}(X11);
      \begin{pgfonlayer}{background}
        \node[fit=(X20) (X21) (lab), bg] {};
      \end{pgfonlayer}
      \begin{pgfonlayer}{background}
        \node[fit=(X10) (X11), bg] {};
      \end{pgfonlayer}
  \end{tikzpicture}
\caption{Type $X_1$ has a transposition, and type $X_2$ has reset.
The connecting $f,g$ transformations transfer these, so both objects end up with the same permutation-reset automaton.}
\end{figure}
With $c=\left( \begin{smallmatrix}
  0_1 & 1_1 \\ 1_1 & 0_1
\end{smallmatrix}\right)$, $r=\left( \begin{smallmatrix}
  0_2 & 1_2 \\ 1_2 & 1_2
\end{smallmatrix}\right)$,
$f_{1\rightarrow 2}=\left( \begin{smallmatrix}
  0_1 & 1_1 \\ 0_2 & 1_2
\end{smallmatrix}\right)$,
$g_{2\rightarrow 1}=\left( \begin{smallmatrix}
  0_2 & 1_2 \\ 0_1 & 1_1
\end{smallmatrix}\right)$ we can see how the action is transferred.
The cycle goes from $X_1$ to $X_2$ by $g_{2\rightarrow 1}cf_{1\rightarrow 2}=\left( \begin{smallmatrix}
  0_2 & 1_2 \\ 1_2 & 0_2
\end{smallmatrix}\right)$.
The reset goes from $X_2$ to $X_1$ by $f_{1\rightarrow 2}rg_{2\rightarrow 1}=\left( \begin{smallmatrix}
  0_1 & 1_1 \\ 1_1 & 1_1
\end{smallmatrix}\right)$.

Similar transfers explain how bigger groups can be assembled.
For example, if a type realized by a three-element set with a 3-cycle only can generate $S_3$ if it has a bijective map to a two-element set with a transposition.
\end{example}

\begin{example}[Stateless representation of $\Z_4$ built hierarchically from two
  $\Z_2$'s]
  \label{ex:statelessZ4}
Without a doubt, the hierarchical combination of counters is very familiar to us, since our number notation system taught in school works the same way.
It is a particularly well-behaving example of a wreath product, thus we can give a description without any reference to states.
We need to provide an operation whose powers form a 4-cycle.
\end{example}
\setlength{\intextsep}{0pt}%
\setlength{\columnsep}{3pt}%
\begin{wrapfigure}[3]{r}{.2\textwidth}
  \centering
  \begin{tikzpicture}[node distance=14pt]
    \tikzset{obj/.style={fill,circle,inner sep=1.5pt},
            arr/.style={-Latex}}
    \node [obj] (o) {};
    \node [obj,below of=o] (b) {};
    \path[->,every loop/.append style=arr]
    (o) edge [loop left] node {$^+0_1$} (o)
    (o) edge [loop right] node {$^+1_1$} (o)
    (b) edge [loop left] node {$^+0_2$} (b)
    (b) edge [loop right] node {$^+1_2$} (b);
  \end{tikzpicture}
\end{wrapfigure}
\vskip-1.4ex
We have two $\Z_2$ components.
The top level (depth 1) counts the 1's, the bottom level (depth 2) counts the 2's.
We take the direct product of the arrows, so we have 4 `coordinatized' operations: $\left\{ (^+0_1,^+0_2),(^+1_1,^+0_2),(^+1_1,^+1_2),(^+0_1,^+1_2)\right\}$. But how can we compose these?

The idea of the wreath product is  we compose in the top level without
considering anything else, so we have the composition table of $\Z_2$. What we
do on the bottom level, depends on what happens on the top. In the wreath
product, we use transformation representations, thus we can use the state to
determine on the top level.\hfill\null 
\begin{wrapfigure}[5]{r}{.2\textwidth}
  \centering
  \begin{tblr}{
    hline{2-4}={2-4}{0.4pt,solid},
    vline{2-4}={2-4}{0.4pt,solid}}
   & $^+0_1$ & $^+1_1$\\
  $^+0_1$ & $^+0_2$ & $^+0_2$\\
  $^+1_1$ & $^+0_2$ & $^+1_2$
  \end{tblr}
\end{wrapfigure}
Here we use a \emph{rule table}, connecting the composition table of the top level to the bottom level's.
The rule is very simple: do not do anything unless there is a carry bit in the case of composing $^+1_1$ by itself.
The operation $(^+0_1,^+0_2)$ is the identity and $c=(^+1_1,^+0_2)$ is the generating increase by 1 operation.
Thus, $c^2=(^+0_1,^+1_2)$, $c^3=(^+1_1,^+1_2)$ and $c^4$ is the identity. In short, $c$ represents $^+1$ in $\Z_4$.
If we don't do compression, then we get an 8-cycle. We can still use the same rule table though.

\begin{example}[Full Transformation Semigroup to Power Set Action]
  \label{ex:Tn}
Let's denote the $n$-element set $\{1,\ldots,n\}$ by $X$.
The full transformation semigroup $T_n$ consists of all transformations of type $X\rightarrow X$, thus $|T_n|=n^n$.
The image set has all the subsets except the empty set: $\imgs_{T_n}(X)=2^X\setminus\{\varnothing\}$, $2^n-1$ subsets.

Let's denote $\cT$ as the one-object semigroupoid with the arrows of the transformations in $T_n$, and $\cI$ the semigroupoid with objects $\imgs_{T_n}(X)$ and with the arrows of all the possible transformations between them (not the elements of $T_n$, but the transformations they induce). To simplify notation, we will not explicitly write the canonical set-valued functor, just use the sets for the objects.

Now we construct the relational functor $\varphi: \cT\rightarrow \cI$.
The single object $X$ of $\cT$ goes to the complete set of objects $2^X\setminus\{\varnothing\}$ of $\cI$.
Similarly, an arrow $t$ in $\cT$ goes to $2^n-1$ arrows, one arrow from each object of $\cI$.
They are all in different hom-sets (since the domains are different), according to how $t$ acts on the subsets of $X$:
$$ t:X\rightarrow X \mapsto \left\{\iota:X_i\rightarrow X_j  \mid X_it=X_j, t|_{X_i}=\iota  \right\}.$$
We need to check the compatibility condition: $\varphi_1(t)\varphi_1(t')\subseteq\varphi_1(tt')$.
In $\cT$ we can always compose.
The set of arrows $\varphi_1(t)$ will have exactly one arrow from each subset $X_i$, but no matter where they end, there will be exactly one arrow from $\varphi_1(t')$ to compose with. After the set-wise composition we will have exactly one arrow from each object corresponding to $tt'$, and this is exactly the set $\varphi_1(tt')$. We have equality of the sets of arrows.

The functor $\varphi$ is full (surjective on objects and hom-sets), and goes from a single-object to a many-objects semigroupoid.
In a sense we can divide a type into several subtypes.
This construction is also the base of the holonomy decomposition.
\end{example}

\begin{example}[Collapsing relational functor, but no compression yields tracing
  product.] We define $\varphi$ by $\varphi_0(0_1)=\varphi_0(1_1)=\{S_1\}$,
  $\varphi_0(0_2)=\varphi_0(1_2)=\{S_2\}$, and  $\varphi_1(a)=\{h\}$, $\varphi_1(b)=\{f\}$,
$\varphi_1(d)=\{g\}$,
$\varphi_1(c)=\{k\}$.

  \label{ex:nocompression}
 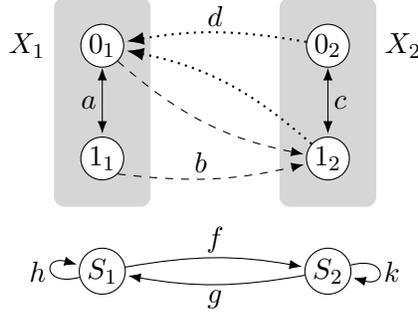
\begin{figure}[h]
  \begin{tikzpicture}[shorten >=1pt, node distance=1.5cm, on grid, auto,inner sep=2pt]
    \tikzset{arr/.style={-Latex},
             rarr/.style={Latex-Latex},
             bg/.style={fill=gray!32,draw=none,rounded corners,inner sep=10pt},
             n/.style={draw,circle,inner sep=1pt,fill=white}}
    \node[n] (X10)   {$0_1$};
    \node[n] (X11) [below of=X10] {$1_1$};
    \node at (3,0) [n] (X20)  {$0_2$};
    \node[n] (X21) [below of=X20] {$1_2$};
    \node[] (X1label) [left = 1cm of X10] {$X_1$};
    \node[] (X2label) [right = 1cm of X20] {$X_2$};
    \node[n] (S1) [below of=X11] {$S_1$};
    \node[n] (S2) [below of=X21] {$S_2$};
    \path[->,every loop/.append style=arr,every edge/.append style=arr]
    (S1) edge [bend left=10] node {$f$} (S2)
    (S2) edge [bend left=10] node {$g$} (S1)
    (S1) edge [loop left] node {$h$} (S1)
     (S2) edge [loop right] node {$k$} (S2)
     (X10) edge [rarr] node[left] {$a$} (X11)
      (X20) edge [rarr] node {$c$} (X21)
      (X10.south east) edge [dashed, bend right=15]  (X21)
      (X11.south east) edge [dashed, bend right=10] node {$b$} (X21)
      (X20) edge[dotted,thick,bend right=10] node[above] {$d$} (X10)
      (X21) edge[dotted,thick,bend right=15] (X10);
      \begin{pgfonlayer}{background}
        \node[fit=(X20) (X21), bg] {};
      \end{pgfonlayer}
      \begin{pgfonlayer}{background}
        \node[fit=(X10) (X11), bg] {};
      \end{pgfonlayer}
  \end{tikzpicture}
\caption{In semigroupoid above both types $X_1$and $X_2$ have the same $\Z_2$ dynamics.
However, the connecting transporters are constant maps, therefore there are no
interchangeable arrows.
When mapped onto the semigroupoid below, there is no compression.}
\end{figure}
The other generated transformations map similarly. The constants
$\varphi_1(da)=\{g\}$, $\varphi_1(bc)=\{f\}$, and identities map to the
corresponding loops.
Therefore, we have collapsings by $\varphi$.
The monoids for types $X_1$ and $X_2$ are isomorphic, but they are not
interchangeable in the semigroupoid, as there are no invertible bijections
between them.
Thus, there is no compression and the pinhole cascade product reverts to the
tracing product. The two $\Z_2$'s are kept separate.

\end{example}

\end{document}